\documentclass[a4paper,11pt,reqno]{amsart}
\usepackage[T1]{fontenc}
\usepackage[utf8]{inputenc}
\usepackage{lmodern}
\usepackage{amsmath, amsthm, amssymb,amscd, mathrsfs, amsfonts, mathtools}
\usepackage{hyperref}
\usepackage{euler}
\usepackage{times}
\usepackage[all]{xy}
\usepackage{todonotes}
\usepackage{xcolor, color}
\usepackage{enumerate}
\usepackage{tabularx}

\usepackage[lite]{amsrefs}

\renewcommand{\PrintDOI}[1]{\href{http://dx.doi.org/\detokenize{#1}}{doi: \detokenize{#1}}%
	\IfEmptyBibField{pages}{, (to appear in print)}{}}



%
\textwidth 15cm \textheight 20cm \oddsidemargin .1truein

\newcommand{\Z}{\mathbb{Z}}

\newcommand{\rt}{\vartriangleright}

\newtheorem{theorem}{Theorem}[section]

\newtheorem{proposition}[theorem]{Proposition}

\newtheorem{definition}[theorem]{Definition}

\newtheorem{example}[theorem]{Example}

\newtheorem{remark}[theorem]{Remark}


\title{ Ternary and $n$-ary $f$-Distributive Structures   }

\author{Indu R. U. Churchill}
\address{Department of Mathematics,
	University of South Florida, Tampa, FL 33620, U.S.A.}
 \email{udyanganiesi@mail.usf.edu}

\author{M. Elhamdadi}
\address{Department of Mathematics,
	University of South Florida, Tampa, FL 33620, U.S.A.}
\email{emohamed@math.usf.edu}

 \author{M. Green}
 \address{Department of Mathematics,
 	University of South Florida, Tampa, FL 33620, U.S.A.}
 \email{mjgreen@mail.usf.edu}

 \author{A. Makhlouf}
 \address{Universit\'e de Haute Alsace, Laboratoire de Math\'ematiques, Informatique et Applications,  France}
 \email{Abdenacer.Makhlouf@uha.fr}

\begin{document}

\begin{abstract}

\noindent We introduce and study ternary $f$-distributive structures, Ternary $f$-quandles and more generally their higher $n$-ary analogues.  A classification of ternary $f$-quandles is provided in low dimensions.  
Moreover, we study extension theory and  introduce a cohomology theory for ternary, and more generally $n$-ary, $f$-quandles. Furthermore, we give some computational examples.
\end{abstract}

\maketitle


\tableofcontents
\section{Introduction}
The first instances of ternary operations appeared in the nineteenth century when Cayley considered cubic matrices. 
Ternary operations or more generally  $n$-ary operations appeared  naturally  in  various  domains  of  theoretical  and  mathematical
physics.  The first instances of ternary Lie algebras appeared  in the Nambu's Mechanics when generalizing hamiltonian mechanics by considering more than one hamiltonian \cite{Nambu}. The algebraic formulation of Nambu's Mechanics was achieved  by Takhtajan in \cite{MR1290830}. Moreover, ternary algebraic structures appeared in String and Superstring theories when Basu and Harvey suggested to replace Lie algebra in the context of Nahm equations by a 3-Lie algebra. Furthermore, a ternary operation was used by Bagger-Lambert in the context of Bagger-Lambert-Gustavsson model of M2-branes and in the
 work of Okubo \cite{Okubo} on Yang-Baxter equation which gave impulse to significant development on $n$-ary algebras.
In recent years, there has been a growth of interests in many generalizations of binary structures to higher $n$-ary contexts. 
In Lie algebra theory, for example, the bracket is replaced by a $n$-ary bracket and the Jacobi identity is replaced by its higher analogue, see \cite{Filippov}. Generalizations of quandles to the ternary case were done recently in \cite{EGM}.

In this paper we introduce and study a twisted version of ternary, respectively  $n$-ary, generalizations of racks and quandles, where the structure is defined by a ternary operation and a linear map twisting  the distributive property. These type of algebraic structures, called sometimes Hom-algebras,  appeared first in quantum deformations of algebras of vector fields, motivated by physical aspects. A systematic study and mathematical aspects were provided for Lie type algebras by Hartwig-Larsson and Silvestrov in \cite{HLS} whereas associative and other nonassociative algebras were discussed by the fourth author  and Silvestrov in \cite{MS} and $n$-ary Hom-type algebras in \cite{AMS}. The main feature of all these generalization is that the usual identities are twisted by a homomorphism. 
We introduce in this article  the notions of ternary, respectively  $n$-ary, $f$-shelf (resp. $f$-rack, $f$-quandle), give some key constructions and properties. Moreover, we provide  a classification in low dimensions of $f$-quandles.  We also study extensions  and modules, as well as cohomology  theory of these structures.  For classical quandles theory, we refer to 	\cite{EN}, see also \cite{And-Grana,Carter-Crans-Elhamdadi-Saito,Carter-Elhamdadi-Grana-Saito,CES,CENS,EMR,FR,Joyce,Matveev}. 
	For basics and some development of  Hom-type algebras, we refer to \cite{FG,LS,MS,MS4,MakhloufSilv,Yau1,Yau2,Yau3}

 This paper is organized as follows. In Section 2, we review the basics of $f$-quandles and ternary distributive structures and give the general $n$-ary setting.  In Section 3, we discuss a key construction introduced by Yau, we show that given a $n$-ary $f$-shelf (resp. $f$-rack, $f$-quandle) and a shelf morphism then one constructs a new  $n$-ary $f$-shelf (resp. $f$-rack, $f$-quandle) and we provide examples.  In Section 4, we provide a   classification of ternary $f$-quandles in low dimensions.  Section 5 gives the extension theory of $f$-quandles and modules.  Finally, in Section 6 we introduce the cohomology of $n$-ary $f$-distributive structures and give examples.  

\section{$f$-Quandles and Ternary (resp. $n$-ary)  Distributive Structures}
In this section we aim to introduce the notion of ternary and more generally $n$-ary $f$-quandles, generalizing the notion of $f$-quandle given in  \cite{CEGM}.
\subsection{A Review of $f$-Quandles and related structures}
First, we review the basics of the binary $f$-quandles. We refer to  \cite{CEGM} for the complete study. Classical theory of quandle could be found in 
\begin{definition}\label{twistdef}
An {\it $f$-shelf} is a triple $(X, *, f)$ in which $X$ is a set, $*$ is a binary operation on $X$, and $f \colon X \to X$ is a map such that, for any $x,y,z \in X$, the identity\\
\begin{center} \label{TtwistedCon-I}
$(x *y) * f(z) =  (x * z) * (y * z)$\\
 \end{center}
holds.  An {\it $f$-rack} is an $f$-shelf such that, for any $x,y \in X$, there exists a unique $z \in X$ such that\\
\begin{center} \label{TtwistedCon-II}
$z* y = f(x).$
\end{center}

An {\it $f$-quandle} is an $f$-rack such that, for each $x \in X$, the identity \\
\begin{center} \label{TtwistedCon-III}
$x* x =f(x)$
\end{center}
holds.\\

An \textit{ $f$-crossed set} is an $f$-quandle $(X, *, f)$ such that  $f: X \to X$ satisfies $x * y =f(x) $ whenever   $ y* x = f(y)$ for any $x, y \in X$.
\end{definition}



\begin{definition}
Let $(X_1, *_1, f_1)$ and $(X_2, *_2, f_2)$ be two $f$-racks (resp. $f$-quandles). A map $\phi : X_1 \to X_2$ is an $f$-rack (resp. $f$-quandle) morphism if it satisfies $\phi (a *_1 b) = \phi(a) *_2 \phi(b)$ and $\phi \circ f_1 = f_2 \circ \phi$.
\end{definition}

\begin{remark}
A category of $f$-quandles is a category whose objects are tuples $(A, *, f)$ which are $f$-quandles and morphism are $f$-quandle morphisms.
\end{remark}

\label{Examples}
Examples of $f$-quandles include the following:

  \begin{itemize}
  \setlength{\itemsep}{-3pt}

\item
 Given any set  $X$ and map $f: X \to X$, then the operation  $ x * y = f(x)$ for any $ x, y \in X$ gives a $f$-quandle. We call this a \textit{trivial} $f$-quandle structure on $X$.\\

\item
For any group $G$ and any group endomorphism $f$ of $G$, the operation $x *  y=y^{-1} x f(y) $ defines a $f$-quandle structure on $G$.\\

\item

Consider the Dihedral quandle $R_n$, where $ n \geq 2$, and let $f$ be an automorphism of $R_n$.  Then $f$ is given by $f(x)=ax+b$, for some invertible element $a \in \mathbb{Z}_n $ and some $ b \in \mathbb{Z}_n$ \cite{EMR}.  The binary operation
$x  *  y =f(2y-x)= 2ay- ax +b  \pmod{n}$ gives a $f$-quandle structure called the  {\it Dihedral  $f$-quandle}.\\
\item
Any ${\Z }[T^{\pm 1}, S]$-module $M$
is a $f$-quandle with
$x * y=Tx+Sy$, $x,y \in M$ with $ TS = ST$ and $f(x)=(S+T)x$, called an {\it  Alexander  $f$-quandle}.
  \end{itemize}

\begin{remark}
Axioms 
of Definition \ref{twistdef} give the following identity,  $$(x * y) * ( z * z)  =  (x*z) * (y * z) .$$  We note that the two medial terms in this equation are swapped (resembling the mediality condition of a quandle).  Note also that the mediality in the general context may not be satisfied for $f$-quandles. For example one can check that the $f$-quandle given in item (2) of Examples 
is not medial.
\end{remark}


\subsection{ Ternary and $n$-ary  $f$-Quandles }

\noindent Now we introduce and discuss the analogous notion of a $f$-quandle in the ternary setting and more generally in the $n$-ary setting.

\begin{definition}\label{twistedternarydef}  {\rm 
Let $Q$ be a set, $\alpha$ be a morphism and $T: Q\times Q \times Q \rightarrow Q$ be a ternary operation on $Q$.  The operation  $T$ is said to be {\it right $f$-distributive} with respect to $\alpha$ if it satisfies the following condition
for all  $ x,y,z,u,v \in Q $
\begin{equation}\label{TC1-ternary}T(T(x,y,z),\alpha(u),\alpha(v))=T(T(x,u,v),T(y,u,v),T(z,u,v)).\end{equation}
}
The previous condition is called right $f$-distributivity.
 \end{definition}

\begin{remark} {\rm
Using the diagonal map $D: Q \rightarrow Q \times Q \times Q=Q^{\times 3} $ such that $D(x)=(x,x,x)$, equation  (\ref{TC1-ternary}) can be written, as a map from $Q^{\times 5} $ to $Q$,  in the following form
\begin{equation}\label{Teq2}
T \circ (T \times \alpha \times \alpha)=T \circ ( T \times  T \times T )\circ\rho\circ( id \times id \times id \times D \times D),
\end{equation}
where $id$ stands for the identity map. In the whole paper we denote by
$\rho:Q^{\times 9}\rightarrow Q^{\times 9}$ the map defined as $\rho=p_{6,8}\circ p_{3,7}\circ p_{2,4}$  where $p_{i,j}$ is the transposition $i^{th}$ and $j^{th}$ elements, i.e.
\begin{equation}\label{RHO}
\rho(x_1,\cdots,x_9)=(x_1,x_4,x_7,x_2,x_5,x_8,x_3,x_6,x_9).
\end{equation}
}
 \end{remark}
 \begin{definition}\label{twistedternaryquandledef}  {\rm 
Let $T: Q\times Q \times Q \rightarrow Q$ be a  ternary operation on a set $Q$.
 The triple $(Q,T,\alpha)$ is said to be a \emph{ternary $f$-shelf} if the  identity (\ref{TC1-ternary}) holds.  If, in addition, for all $a,b \in Q$, the map $R_{a,b}:Q \rightarrow Q$ given by $R_{a,b}(x)=T(x,a,b)$ is invertible,  
  then $(Q,T,\alpha)$ is said to be a \emph{ternary $f$-rack}.  If further $T$ satisfies
$  T(x,x,x)=\alpha(x),$ for all $ x \in Q,$
  then $(Q,T, \alpha)$ is called a \emph{ternary $f$-quandle}.
}
\end{definition}
\begin{remark}
Using the right translation $R_{a,b} : Q \to Q$ defined as $R_{a,b}(x) = T(x, a, b)$, the identity \eqref{TC1-ternary} can be written as  $ R_{\alpha(u), \alpha(v)} \circ R_{y, z} = R_{R_{u,v}(y), R_{u,v}(z)} \circ R_{u,v}$.
\end{remark}
 \begin{example} \label{T-stepITheorem}
 Let $(Q,*,f)$ be a $f$-quandle and define a ternary operation on $Q$ by $$T(x,y,z)=(x * y)* f(z), \forall x,y,z \in Q.$$  It is straightforward to see that $(Q, T, \alpha)$ is a ternary $f$-quandle where $\alpha = f\circ f$.  Note that in this case $R_{a,b}=R_b \circ R_a$. We will say that this ternary $f$-quandle is induced by a (binary) quandle.
 \end{example}

\begin{remark} Binary $f$-quandles relates to ternary $f$-quandle when $\alpha=f\circ f$. Also, ternary operations $T$  lead to binary operations by setting for example $ x*y = T (x,y,y)$.
\end{remark}

   \begin{example}\label{TAlex}
   Let $(M,*,f)$ be an Alexander $f$-quandle, then the ternary $f$-quandle $(M, T ,\alpha )$ coming from $M$,  has the operation $T(x,y,z)=Px+Qy+Rz$ where $P, Q, R$ commutes with each other and $\alpha (x) = (P + Q + R)x$.
    \end{example}

 \begin{example}
 Any group $G$ with the ternary operation $T(x,y,z)=f(xy^{-1}z)$  gives an example of ternary $f$-quandle.  This is called $f$ $heap$ (sometimes also called a groud) of the group $G$.
\end{example}

\noindent
A morphism of ternary quandles is a map $\phi: ( Q, T) \rightarrow (Q',T')$ such that $$\phi(T(x,y,z))=T'(\phi(x),\phi(y),\phi(z)).$$  A bijective ternary quandle endomorphism is called ternary $f$-quandle automorphism.\\
Therefore, we have a category 
whose objects are ternary $f$-quandles and morphisms as defined above.


 As in the case of the binary quandle there is a notion of {\it medial} ternary quandle

 \begin{definition}  {\rm \cite{Borowic}
A ternary quandle $(Q,T, \alpha)$  is said to be {\it medial} if for all $a, b, c, d, e, f, g, h, k  \in Q,$ the following identity is satisfied
$$T(T(a,b,c),T(d,e,f),T(g,h,k))= T(T(a,d,g),T(b,e,h),T(c,f,k)).$$
}
 \end{definition}

  This definition of {\it mediality} can be written in term of the following commutative diagram

\[\begin{xy}
    (-2, 20)*+{\underbrace{Q \times \cdot \cdot \cdot \times Q}_{\text{ 9 times}}}="1";
    (-30,10)*+{Q \times Q \times Q}="2";
    (-30,-10)*+{Q}="3";
    (30,-10)*+{Q \times Q \times Q}="5";
    (30, 10)*+{ \underbrace{Q \times \cdot \cdot \cdot \times Q}_{\text{ 9 times}} }="6";
        {\ar^{\rho} "1";"6"};
        {\ar_{T \times T \times T} "1";"2"};
        {\ar_{T} "2";"3"};
        {\ar^{T \times T \times T} "6";"5"};
         {\ar_{T} "5";"3"};

\end{xy}
\]


%
%
%


  \begin{example}
  Every affine ternary $f$-quandle is medial.       \end{example}


We generalize the notion of ternary $f$-quandle to $n$-ary setting.

 \begin{definition}\label{n-Tternarydef}  {\rm 
An $n$-ary distributive set is a triple $(Q,T, \alpha)$ where $Q$ is a set, $\alpha : Q \to Q$ is a morphism and $T: Q^{ \times n} \rightarrow Q$ is an $n$-ary operation satisfying the following conditions:
\begin{enumerate}[(I)]
\item \begin{eqnarray*}\label{TC1-n-ary}& T(T(x_1,\cdots,x_n),\alpha(u_1),\cdots, \alpha(u_{n-1}))=\nonumber \\ & T(T(x_1,u_1,\cdots, u_{n-1}),T(x_2,u_1,\cdots, u_{n-1}),\cdots, T(x_n,u_1,\cdots, u_{n-1})),\end{eqnarray*} $ \forall x_i,u_i \in Q$ ($f$-distributivity).\\

\item For all $a_1,\cdots,a_{n-1} \in Q$, the map $R_{a_1,\cdots,a_{n-1}}:Q \rightarrow Q$ given by $$R_{a_1,\cdots,a_{n-1}}(x)=T(x,a_1,\cdots,a_{n-1})$$ is invertible. \\ 

\item For all $x \in Q$,\begin{equation*}\label{TC3-n-ary} T(x,\cdots,x)=\alpha(x).\end{equation*}

\end{enumerate}

If $T$ satisfies only condition (1), then $(Q,T, \alpha)$ is said to be an $n$-ary $f$-shelf.  If both conditions (1) and  (2) are satisfied then $(Q,T, \alpha)$ is said to be an  $n$-ary $f$-rack. If all three conditions (1), (2) and (3) are satisfied then $(Q,T, \alpha)$ is said to be an $n$-ary $f$-quandle.

}
 \end{definition}
 \begin{example} \label{Tn-ary}
 Let $(Q,*,f)$ be an $f$-quandle and define an $n$-ary twisted operation on $Q$ by $$T(x_1, x_2, \cdots, x_n)=(( \dots (x_1 * x_2) * f(x_3)) * f^2(x_4))* \dots ) *  f^{n-2}(x_n) \forall x_i \in Q,$$ where $ i = 1, \cdots, n$. \\ It is straightforward to see that $(Q, T, \alpha)$ is an $n$-ary $f$-quandle where $\alpha = f^{n-1}$.

 \end{example}

 \begin{example}
 Let $(M,*,f)$ be an Alexander $f$-quandle, then the $n$-ary $f$-quandle $(M, T ,\alpha )$ coming from $M$ has the operation $T(x_1, \cdots, x_n)=S_1 x_1+S_2 x_2+ \cdots +S_n x_n,$ where $S_i, i= 1, \cdots , n$ commutes with each other and $\alpha (x) = (S_1 + S_2 + \cdots + S_n)x$.

 \end{example}

 \begin{definition}  {\rm
An $n$-ary quandle $(Q,T, \alpha)$  is said to be {\it medial} if for all $x_{ij}  \in Q, 1 \leq i,j \leq n,$ the following identity is satisfied
\begin{eqnarray*}
T(T(x_{11},x_{12},\cdots, x_{1n}),T(x_{21},x_{22},\cdots, x_{2n}),\cdots, T(x_{n1},x_{n2},\cdots, x_{nn}))=\nonumber \\
T(T(x_{11},x_{21},\cdots, x_{n1}),T(x_{12},x_{22},\cdots, x_{n2}),\cdots, T(x_{1n},x_{2n},\cdots, x_{nn})).
\end{eqnarray*}
}
 \end{definition}

\section{Yau Twist}

 The following  proposition provide a way of constructing  new  $n$-ary $f$-shelf (resp. $f$-rack, $f$-quandle) along a shelf morphism. In particular, given an $n$-ary  shelf (resp.  rack, quandle) and a  shelf morphism, one may obtain  an $n$-ary $f$-shelf (resp. $f$-rack, $f$-quandle). Recall that this construction was introduced first by Yau to deform a Lie algebra to a Hom-Lie algebra along a Lie algebra morphism. It was generalized to different situation, in particular to $n$-ary algebras in \cite{AMS}.

\begin{proposition}
Let $(Q,T, \alpha)$ be an  $n$-ary $f$-shelf (resp. $f$-rack, $f$-quandle) and $\beta : Q\rightarrow Q$ a shelf morphism then  $(Q,\beta\circ T, \beta\circ \alpha)$  is a new   $n$-ary $f$-shelf (resp. $f$-rack, $f$-quandle).
\end{proposition}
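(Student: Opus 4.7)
The plan is to verify the three axioms of Definition \ref{n-Tternarydef} for the twisted data $T' := \beta\circ T$ and $\alpha' := \beta\circ\alpha$, relying only on the two facts that $\beta$ commutes with $T$ and with $\alpha$ (which is precisely what it means for $\beta$ to be a morphism of the $f$-structure: $\beta\circ T = T\circ(\beta\times\cdots\times\beta)$ and $\beta\circ\alpha=\alpha\circ\beta$).

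The main step is the $f$-distributivity. I would start from the left-hand side $T'(T'(x_1,\dots,x_n),\alpha'(u_1),\dots,\alpha'(u_{n-1}))$ and repeatedly apply the two commutation relations above to pull all the $\beta$'s except one outermost copy inside the arguments. The inside then becomes exactly the left-hand side of the $f$-distributivity identity for $T$ applied to $\beta(x_i)$ and $\beta(u_j)$. Invoking the $n$-ary analogue of \eqref{TC1-ternary} and then pushing the $\beta$'s back outside via the morphism property reconstructs the right-hand side as $T'(T'(x_1,u_1,\dots,u_{n-1}),\dots,T'(x_n,u_1,\dots,u_{n-1}))$, as required.

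The remaining axioms are short. The quandle identity is immediate from $T'(x,\dots,x)=\beta(T(x,\dots,x))=\beta(\alpha(x))=\alpha'(x)$. For the rack axiom, the twisted right translation factors as $R'_{a_1,\dots,a_{n-1}}=\beta\circ R_{a_1,\dots,a_{n-1}}$, so it is invertible as soon as $\beta$ itself is bijective; hence the rack and quandle cases implicitly require $\beta$ to be an automorphism, while the shelf case works for an arbitrary shelf morphism.

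There is no conceptual obstacle here; the only real difficulty is keeping track of the indexing in the $n$-ary setting. To stay safe I would first carry out the argument for ternary operations using the compact form \eqref{Teq2} and the shuffle $\rho$, where each step is a single, clearly labelled application of either the morphism property or the $f$-distributivity of $T$, and then remark that the general $n$-ary version is formally identical with the evident $n$-ary diagonal and shuffle map replacing $D$ and $\rho$.
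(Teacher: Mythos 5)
Your proposal is correct and follows essentially the same route as the paper: an element-wise computation that uses the morphism identities $\beta\circ T = T\circ(\beta\times\cdots\times\beta)$ and $\beta\circ\alpha=\alpha\circ\beta$ to pull the twists inside, applies the $f$-distributivity of $T$, and pushes them back out, with the quandle axiom handled exactly as you do. Your side remark that $R'_{a_1,\dots,a_{n-1}}=\beta\circ R_{a_1,\dots,a_{n-1}}$ is invertible only when $\beta$ itself is bijective is a genuine and worthwhile observation --- the paper's own proof of the rack axiom asserts invertibility of $\beta\circ T(\,\cdot\,,a_1,\dots,a_{n-1})$ without justification, so your version is in fact the more careful one.
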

\begin{proof}
\begin{enumerate} [(A)]
  \item \begin{eqnarray*}
 \lefteqn{
 LHS (I) = \beta \circ T(\beta \circ T(x_1,\cdots,x_n),\beta \circ \alpha(u_1),\cdots, \beta \circ \alpha(u_{n-1}))} \nonumber \\ && =
\beta \circ T(T(\beta(x_1), \cdots, \beta(x_n)), \alpha(\beta(u_1)), \cdots, \alpha(\beta(u_{n-1}))) \nonumber \\ && =
T(\beta (T(\beta(x_1), \cdots, \beta(x_n))), \beta (\alpha(\beta(u_1))), \cdots, \beta ( \alpha(\beta(u_{n-1})))) \nonumber \\ && =
T(T(\beta^2(x_1), \cdots, \beta^2(x_n)), \alpha(\beta^2(u_1)), \cdots, \alpha(\beta^2(u_{n-1}))) \nonumber \\ && =
T[T\{\beta^2(x_1), \beta^2(u_1), \cdots, \beta^2(u_{n-1})\}, T\{\beta^2(x_2), \beta^2(u_1), \cdots, \beta^2(u_{n-1})\},\nonumber \\ &&  \quad \cdots,  T\{\beta^2(x_n), \beta^2(u_1), \cdots, \beta^2(u_{n-1})\}] \nonumber \\ && =
T[T\{\beta^2(x_1, u_1, \cdots, u_{n-1})\}, T\{\beta^2(x_2, u_1, \cdots, u_{n-1})\},  \\ && \quad  \cdots,  T\{\beta^2(x_n, u_1, \cdots, u_{n-1})\}]  \quad \because [I] \nonumber \\ && =
\beta \circ T[ T\{\beta(x_1, u_1, \cdots, u_{n-1})\},T\{\beta(x_2, u_1, \cdots, u_{n-1})\} , \\ && \quad  \cdots,  T\{\beta(x_n, u_1, \cdots, u_{n-1})\}] \nonumber \\&& =
 \beta \circ T(\beta \circ T(x_1,u_1,\cdots, u_{n-1}),\beta \circ T(x_2,u_1,\cdots, u_{n-1}),\\ && \quad \cdots, \beta \circ T(x_n,u_1,\cdots, u_{n-1})) \nonumber \\&& =
 RHS (I) , \ \ \  \forall x_i,u_i \in Q.
 \end{eqnarray*}
  \noindent Therefore, $(Q, \beta \circ T, \beta \circ \alpha)$ is an $n$-ary $f$-shelf.\\

 \item  For all $a_1,\cdots,a_{n-1} \in Q$, the map $R_{\beta(a_1),\cdots,\beta(a_{n-1})}:Q \rightarrow Q$ given by \\
 \begin{eqnarray*}
 \lefteqn{
 R_{\beta(a_1),\cdots,\beta(a_{n-1})}(x)} \nonumber \\ && =
 T(\beta(x), \beta(a_1), \cdots, \beta(a_{n-1})) \nonumber \\ && =
 \beta \circ T(x,a_1,\cdots,a_{n-1})\end{eqnarray*} is invertible.  \\
 \noindent Therefore, $(Q, \beta \circ T, \beta \circ \alpha)$ is an $n$-ary $f$-rack.\\

 \item  For all $x \in Q$,\\
 \begin{eqnarray*}
 \lefteqn{
 \beta \circ  T(x,\cdots,x)} \nonumber \\ &&=
 T(\beta(x), \cdots, \beta(x)) \nonumber \\ &&=
\beta \circ \alpha(x).\end{eqnarray*}\\
 \noindent Therefore, $(Q, \beta \circ T, \beta \circ \alpha)$ is an $n$-ary $f$-quandle.
 \end{enumerate}
\end{proof}
\begin{example}
Let $(Q,*)$ be a quandle and define a ternary operation on $Q$ by $T(x,y,z) = (x * y) * z,\forall x,y,z \in Q$. It is straightforward to see that (Q,T) is a ternary quandle. Let $\alpha : Q \to Q$ be a morphism. Then $( Q, \alpha \circ T, \alpha)$ is a ternary $f$-quandle.
\end{example}

 \begin{example}
 Let $(M,T)$ be an Alexander ternary quandle, where $T(x,y,z)=t^2x+t(1-t)y+(1-t)z$. Let $f: M \to M$ be a morphism. Then $(M, f \circ T, f)$ is a ternary $f$-quandle.
 \end{example}
 \begin{example}
   Let $M$ be a $\mathbb{Z}[t,t^{-1}]$-module. Consider the   ternary quandle $(M,T)$ defined by  the operation $T(x,y,z)=t^2x+t(1-t)y+(1-t)z$, where $x,y,z\in M$. For any map $\phi :M\rightarrow M$ satisfying $\phi \circ T= T\circ ( \phi \otimes\phi \otimes\phi )$, we associate a new ternary $f$-quandle $(M,\phi \circ T, \phi) $.     \end{example}

 \begin{example}
  Let $M$ be any ${\Lambda}$-module where
${\Lambda}=\mathbb{Z}[t^{\pm 1},s]$. The operation $T(x,y,z)=tx+sy+kz$ where $\alpha(x) = (t+s+k)x$ defines a ternary $f$-quandle structure on $M$.  (We call this an {\it affine} ternary $f$-quandle). Let $f: M \to M$ be a morphism. Then $(M, f \circ T, f \circ \alpha)$ is a ternary $f$-quandle.
    \end{example}
\begin{example}
 Let $(Q,*)$ be a quandle and define a $n$-ary  operation on $Q$ by $T(x_1, x_2, \cdots, x_n)=(( \dots (x_1 * x_2) * x_3 )* x_4)* \dots ) * x_n), \forall x_i \in Q$ where $ i = 1, \cdots, n$. Let $f: Q \to Q$ be a morphism. It is straightforward to see that $(Q,  f \circ T, f )$ is an $n$-ary $f$-quandle.
 \end{example}

\section{Classification of Ternary $f$-Quandles of low orders}

\noindent We developed a simple program to compute all ternary $f$-quandles of orders 2 and 3.  The results of which we used to obtain the complete list of isomorphism classes.

\noindent For order 2, we found 6 distinct isomorphism classes, each of which can be defined over $\mathbb{Z}_2$ by one of the following maps: $\tau(x,y,z)=x$, $\tau(x,y,z)=x+1$, $\tau(x,y,z)=x+y$, $\tau(x,y,z)=x+z$, $\tau(x,y,z)=x+y+z$, or $\tau(x,y,z)=x+y+z+1$.

In the case of order 3, we found a total of 84 distinct isomorphism classes, including 30 ternary quandles (those such that $f=id_Q$).

Since for each fixed $a,b$, the map $x \mapsto \tau(x,a,b)$ is a permutation, in the following table we describe all ternary $f$-quandles of order three in terms of the columns of the Cayley table. Each column is a permutation of the elements and is described in standard notation, that is by explicitly writing it in terms of products of disjoint cycles.  Thus for a given $z$ we give the permutations resulting from fixing $y=1,2,3$. For example, the ternary set $\tau_{12}(x,y,z)$ with the Cayley Table \ref{terntable1} will be represented with the permutations $(1),(12),(13); (12),(1),(23); (13),(23),(1)$. This will appear on Table \ref{terntable3} as shown in Table \ref{terntable2}.

\begin{table}
\begin{center}
\begin{tabular}{|c|c|c||c|c|c||c|c|c|}
\hline
\multicolumn{3}{|c||}{z=1} &
\multicolumn{3}{|c||}{z=2} &
\multicolumn{3}{|c|}{z=3} \\ \hline
1 & 2 & 3 & 2 & 1 & 1 & 3 & 1 & 1 \\ \hline
2 & 1 & 2 & 1 & 2 & 3 & 2 & 3 & 2 \\ \hline
3 & 3 & 1 & 3 & 3 & 2 & 1 & 2 & 3 \\ \hline
\end{tabular}
\medskip
\caption{Cayley representation of ternary quandle $\tau_{12}$ }\label{terntable1}
\end{center}
\end{table}

\begin{table}
 \begin{center}
\begin{tabular}{|c|c|c|c|}
\hline
$\tau$ & z=1 & z=2 & z=3 \\ \hline
$\tau_{12}$ & (1),(12),(13)& (12),(1),(23)& (13),(23),(1)\\ \hline
\end{tabular}
\medskip
\caption{Permutation representation of ternary quandle $\tau_{12}$ }\label{terntable2}
\end{center}
\end{table} 

 Table \ref{terntable4} lists the isomorphism classes, the first table lists those that such that $f=id_Q$, and the second table lists classes with members with a non-trivial twisting.

\begin{center}
\begin{table}
\tiny
\begin{tabular}{|c|c|c||c|c|c|}
\hline
 z=1 & z=2 & z=3 &  z=1 & z=2 & z=3\\ \hline
(1),(1),(1) & (1),(1),(1) & (1),(1),(1) &
(1),(1),(1) & (1),(1),(1) & (1),(1),(1 2) \\ \hline
(1),(1),(1) & (1),(1),(1) & (1 2),(1 2),(1) &
(1),(1),(1) & (1),(1),(1) & (1 2),(1 2),(1 2) \\ \hline
(1),(1),(1) & (1),(1),(2 3) & (1),(2 3),(1) &
(1),(1),(1) & (2 3),(1),(1) & (2 3),(1),(1) \\ \hline
(1),(1),(1) & (2 3),(1),(2 3) & (2 3),(2 3),(1) &
(1),(1),(1 2) & (1),(1),(1 2) & (1),(1),(1 2) \\ \hline
(1),(1),(1 2) & (1),(1),(1 2) & (1 2),(1 2),(1) &
(1),(1),(1 2) & (1),(1),(1 2) & (1 2),(1 2),(1 2) \\ \hline
(1),(1),(1 2 3) & (1 2 3),(1),(1) & (1),(1 2 3),(1) &
(1),(1),(1 3 2) & (1 3 2),(1),(1) & (1),(1 3 2),(1) \\ \hline
(1),(1),(1 3) & (1),(1 3),(1) & (1 3),(1),(1) &
(1),(1),(1 3) & (1 3),(1),(1 3) & (1 3),(1),(1) \\ \hline
(1),(1),(1 3) & (1 3),(1 3),(1 3) & (1 3),(1),(1) &
(1),(2 3),(2 3) & (2 3),(1),(2 3) & (2 3),(2 3),(1) \\ \hline
(1),(2 3),(2 3) & (1 3),(1),(1 3) & (1 2),(1 2),(1) &
(1),(1 2),(1 2) & (1 2),(1),(1 2) & (1),(1),(1 2) \\ \hline
(1),(1 2),(1 2) & (1 2),(1),(1 2) & (1 2),(1 2),(1 2) &
(1),(1 2),(1 3) & (1 2),(1),(2 3) & (1 3),(2 3),(1) \\ \hline
(1),(1 2 3),(1 2 3) & (1 2 3),(1),(1 2 3) & (1 2 3),(1 2 3),(1) &
(1),(1 2 3),(1 3 2) & (1 3 2),(1),(1 2 3) & (1 2 3),(1 3 2),(1) \\ \hline
(1),(1 3 2),(1 2 3) & (1 2 3),(1),(1 3 2) & (1 3 2),(1 2 3),(1) &
(1),(1 3),(1 2) & (2 3),(1),(1 2) & (2 3),(1 3),(1) \\ \hline
(2 3),(1),(1) & (1),(1 3),(1) & (1),(1),(1 2) &
(2 3),(2 3),(2 3) & (1 3),(1 3),(1 3) & (1 2),(1 2),(1 2) \\ \hline
(2 3),(1 2),(1 3) & (1 2),(1 3),(2 3) & (1 3),(2 3),(1 2) &
(2 3),(1 2 3),(1 3 2) & (1 3 2),(1 3),(1 2 3) & (1 2 3),(1 3 2),(1 2) \\ \hline
(2 3),(1 3 2),(1 2 3) & (1 2 3),(1 3),(1 3 2) & (1 3 2),(1 2 3),(1 2) &
(2 3),(1 3),(1 2) & (2 3),(1 3),(1 2) & (2 3),(1 3),(1 2) \\ \hline

\hline
\end{tabular}
\medskip
\caption{Isomorphism classes of ternary quandles of order 3 }\label{terntable3}
\end{table}
\end{center}

\begin{center}
\begin{table}
\tiny
\begin{tabular}{|c|c|c||c|c|c|}

\hline
 z=1 & z=2 & z=3 &  z=1 & z=2 & z=3\\ \hline
(1),(1),(1) & (1),(2 3),(1) & (1),(1),(2 3) &
(1),(1),(1) & (1),(2 3),(2 3) & (1),(2 3),(2 3) \\ \hline
(1),(1),(1) & (2 3),(2 3),(1) & (2 3),(1),(2 3) &
(1),(1),(1) & (2 3),(2 3),(2 3) & (2 3),(2 3),(2 3) \\ \hline
(1),(1),(1) & (1 2 3),(1 2 3),(1 2 3) & (1 3 2),(1 3 2),(1 3 2) &
(1),(1),(1) & (1 3 2),(1 3 2),(1 3 2) & (1 2 3),(1 2 3),(1 2 3) \\ \hline
(1),(1),(1 2 3) & (1),(1 3 2),(1 3 2) & (1 2 3),(1 3 2),(1 2 3) &
(1),(1),(1 3 2) & (1 2 3),(1 3 2),(1 3 2) & (1 2 3),(1),(1 2 3) \\ \hline
(1),(2 3),(2 3) & (1),(2 3),(1) & (1),(1),(2 3) &
(1),(2 3),(2 3) & (1),(2 3),(2 3) & (1),(2 3),(2 3) \\ \hline
(1),(2 3),(2 3) & (2 3),(2 3),(1) & (2 3),(1),(2 3) &
(1),(2 3),(2 3) & (2 3),(2 3),(2 3) & (2 3),(2 3),(2 3) \\ \hline
(1),(2 3),(2 3) & (2 3),(1 2 3),(2 3) & (2 3),(2 3),(1 3 2) &
(1),(1 2),(1 3) & (1 3),(1 2 3),(1 2) & (1 2),(1 3),(1 3 2) \\ \hline
(1),(1 2 3),(1 2 3) & (1),(1 3 2),(1) & (1 3 2),(1 3 2),(1 2 3) &
(1),(1 2 3),(1 3 2) & (1),(1 2 3),(1 3 2) & (1),(1 2 3),(1 3 2) \\ \hline
(1),(1 2 3),(1 3 2) & (1 2 3),(1 3 2),(1) & (1 3 2),(1),(1 2 3) &
(1),(1 3 2),(1 2 3) & (1),(1 3 2),(1 2 3) & (1),(1 3 2),(1 2 3) \\ \hline
(1),(1 3 2),(1 2 3) & (1 3 2),(1 2 3),(1) & (1 2 3),(1),(1 3 2) &
(1),(1 3),(1 2) & (1 2),(1 2 3),(1 3) & (1 3),(1 2),(1 3 2) \\ \hline
(2 3),(1),(1) & (1),(2 3),(1) & (1),(1),(2 3) &
(2 3),(1),(1) & (1),(2 3),(2 3) & (1),(2 3),(2 3) \\ \hline
(2 3),(1),(1) & (2 3),(2 3),(1) & (2 3),(1),(2 3) &
(2 3),(1),(1) & (2 3),(2 3),(2 3) & (2 3),(2 3),(2 3) \\ \hline
(2 3),(1),(1) & (1 2 3),(2 3),(1 2 3) & (1 3 2),(1 3 2),(2 3) &
(2 3),(2 3),(2 3) & (1),(2 3),(1) & (1),(1),(2 3) \\ \hline
(2 3),(2 3),(2 3) & (1),(2 3),(2 3) & (1),(2 3),(2 3) &
(2 3),(2 3),(2 3) & (2 3),(2 3),(1) & (2 3),(1),(2 3) \\ \hline
(2 3),(2 3),(2 3) & (2 3),(2 3),(2 3) & (2 3),(2 3),(2 3) &
(2 3),(2 3),(2 3) & (1 2),(1 2),(1 2) & (1 3),(1 3),(1 3) \\ \hline
(2 3),(1 2),(1 3) & (2 3),(1 2),(1 3) & (2 3),(1 2),(1 3) &
(2 3),(1 2),(1 3) & (1 3),(2 3),(1 2) & (1 2),(1 3),(2 3) \\ \hline
(2 3),(1 2 3),(1 3 2) & (1),(2 3),(1 3 2) & (1),(1 2 3),(2 3) &
(2 3),(1 3 2),(1 2 3) & (1 3 2),(2 3),(1) & (1 2 3),(1),(2 3) \\ \hline
(2 3),(1 3),(1 2) & (1 2),(2 3),(1 3) & (1 3),(1 2),(2 3) &
(2 3),(1 3),(1 2) & (1 3),(1 2),(2 3) & (1 2),(2 3),(1 3) \\ \hline
(1 2),(1),(1 3 2) & (1 3 2),(2 3),(1) & (1),(1 3 2),(1 3) &
(1 2),(2 3),(1 3) & (1 2),(2 3),(1 3) & (1 2),(2 3),(1 3) \\ \hline
(1 2),(1 2),(1 2) & (2 3),(2 3),(2 3) & (1 3),(1 3),(1 3) &
(1 2),(1 2 3),(1 2 3) & (1 2 3),(2 3),(1 2 3) & (1 2 3),(1 2 3),(1 3) \\ \hline
(1 2),(1 3 2),(1) & (1),(2 3),(1 3 2) & (1 3 2),(1),(1 3) &
(1 2),(1 3),(2 3) & (1 3),(2 3),(1 2) & (2 3),(1 2),(1 3) \\ \hline
(1 2 3),(1),(1) & (1),(1 2 3),(1) & (1),(1),(1 2 3) &
(1 2 3),(1),(1 2 3) & (1 2 3),(1 2 3),(1) & (1),(1 2 3),(1 2 3) \\ \hline
(1 2 3),(1),(1 3 2) & (1 3 2),(1 2 3),(1) & (1),(1 3 2),(1 2 3) &
(1 2 3),(2 3),(1 3) & (1 2),(1 2 3),(1 3) & (1 2),(2 3),(1 2 3) \\ \hline
(1 2 3),(1 2),(1 2) & (2 3),(1 2 3),(2 3) & (1 3),(1 3),(1 2 3) &
(1 2 3),(1 2 3),(1) & (1),(1 2 3),(1 2 3) & (1 2 3),(1),(1 2 3) \\ \hline
(1 2 3),(1 2 3),(1 2 3) & (1 2 3),(1 2 3),(1 2 3) & (1 2 3),(1 2 3),(1 2 3) &
(1 2 3),(1 2 3),(1 3 2) & (1 3 2),(1 2 3),(1 2 3) & (1 2 3),(1 3 2),(1 2 3) \\ \hline
(1 2 3),(1 3 2),(1) & (1),(1 2 3),(1 3 2) & (1 3 2),(1),(1 2 3) &
(1 2 3),(1 3 2),(1 2 3) & (1 2 3),(1 2 3),(1 3 2) & (1 3 2),(1 2 3),(1 2 3) \\ \hline
(1 2 3),(1 3 2),(1 3 2) & (1 3 2),(1 2 3),(1 3 2) & (1 3 2),(1 3 2),(1 2 3) &
(1 2 3),(1 3),(2 3) & (1 3),(1 2 3),(1 2) & (2 3),(1 2),(1 2 3) \\
\hline
\end{tabular}
\medskip
\caption{Isomorphism classes of ternary $f$-quandles of order 3 }\label{terntable4}
\end{table}
\end{center}
\newpage
\normalsize
\section{Extensions of $f$-Quandles and Modules}

\noindent
In this section we investigate extensions of ternary $f$-quandles.  We define generalized ternary $f$-quandle $2$-cocycles and give examples.  We give an explicit formula relating group $2$-cocycles to ternary $f$-quandle $2$-cocycles, when the ternary $f$-quandle is constructed from a group.
\subsection{Extensions with dynamical cocycles and Extensions with constant cocycles}
\noindent \begin{proposition}Let $( X, T, F) $ be a ternary $f$-quandle and $A$ be a non-empty set. Let $\alpha: X \times X \times X \to \text{Fun}(A \times A \times A, A)$ be a function and $f, g: A \to A$ are maps.
Then,
$X \times  A$ is a ternary $f$-quandle by the operation $T((x,a), (y,b), (z,c))  = ( T(x,y,z), \alpha_{x,y,z}(a, b,c))$, where $T(x, y, z)$ denotes the ternary $f$-quandle product in $X$, if and only if $\alpha$ satisfies the following conditions:
\begin{enumerate}
\item $\alpha_{x,x,x}(a, a,a) = g(a)$ for all $x \in X$ and $ a \in A$;
\item $\alpha_{x, y, z}(-, b, c) : A \to A$ is a bijection for all $ x, y, z \in X$ and for all $ b, c  \in A$;
\item $\alpha_{T(x, y, z), f(u), f(v)}(\alpha_{x, y, z}(a, b, c), g(d), g(e)) = \\ \alpha_{T(x, u, v), T(y, u, v), T(z, u, v)}(\alpha_{x, u, v}(a, d, e), \alpha_{y, u, v}(b, d, e), \alpha_{z, u, v}(c, d, e))$ for all $ x, y, z, u, v \in X$ and $ a, b, c, d, e \in A$. \label{Tcocycle-3}
\end{enumerate}

\noindent Such function $\alpha$ is called a \textit{dynamical ternary $f$-quandle cocycle} or \textit{dynamical ternary $f$-rack cocycle} (when it satisfies above conditions).

\end{proposition}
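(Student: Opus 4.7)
The plan is to verify, one axiom at a time, the three requirements of a ternary $f$-quandle for the proposed operation on $X \times A$, matching each against exactly one of the listed conditions on $\alpha$; running each equivalence in both directions gives the ``if and only if''. Implicit in the setup, and forced once condition~(1) is imposed, is that the twisting morphism on the extension must be the componentwise map $\widetilde{F}(x,a) = (F(x), g(a))$.

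I would start with the idempotency-type axiom $T(w,w,w) = \widetilde{F}(w)$. With $w = (x,a)$, the proposed operation gives $T(w,w,w) = (T(x,x,x), \alpha_{x,x,x}(a,a,a))$; since $(X,T,F)$ is already a ternary $f$-quandle, the first coordinate equals $F(x)$, so this axiom on $X \times A$ is equivalent to $\alpha_{x,x,x}(a,a,a) = g(a)$, which is condition~(1). Next, for fixed $(y,b),(z,c)$, the right translation $R_{(y,b),(z,c)}$ sends $(x,a)$ to $(R_{y,z}(x),\alpha_{x,y,z}(a,b,c))$. Because $R_{y,z}\colon X\to X$ is already a bijection, any target $(x',a')$ determines $x$ uniquely; existence and uniqueness of $a$ are then equivalent to $\alpha_{x,y,z}(-,b,c)$ being a bijection on $A$ for every $x$, which is exactly condition~(2).

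For $f$-distributivity, I would expand both sides of
\begin{equation*}
T(T(w_1,w_2,w_3),\widetilde{F}(w_4),\widetilde{F}(w_5)) = T(T(w_1,w_4,w_5),\, T(w_2,w_4,w_5),\, T(w_3,w_4,w_5))
\end{equation*}
with $w_i = (x_i,a_i)$, obtaining an equation in $X \times A$. The $X$-components match automatically by the $f$-distributivity of $(X,T,F)$, while the $A$-components reduce precisely to condition~(3); here the symbol ``$f$'' in (3) must be read as the twisting morphism $F$ on $X$, since its arguments $u,v$ lie in $X$ (there is a notational overload in the statement between the map $f\colon A\to A$ and this $F$).

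The argument is thus a matching of definitions rather than a computation concealing a hard step; all three equivalences are immediate once the operation is expanded. The only minor subtlety is keeping the notational overloading straight and observing that $\widetilde{F} = F \times g$ is \emph{forced} rather than freely chosen once one demands that the extension be a ternary $f$-quandle with an operation of the stated form.
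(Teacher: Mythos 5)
Your verification is correct and is exactly the routine coordinatewise check the paper leaves to the reader (it states this proposition without proof, deferring to the Andruskiewitsch--Gra\~na construction): each of the three axioms for $X\times A$ splits into an $X$-component handled by the hypotheses on $(X,T,F)$ and an $A$-component equivalent to the corresponding condition on $\alpha$. Your observations that the twist on $X\times A$ must be $F\times g$ and that the ``$f$'' in condition (3) is the twist $F$ on $X$ correctly resolve the notational overloading in the statement.
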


\noindent The ternary $f$-quandle constructed above is denoted by $X \times_{\alpha} A$, and it  is called \textit{extension} of $X$ by a dynamical cocycle $\alpha$.  The construction is general, as Andruskiewitch and Gra$\tilde{n}$a showed in \cite{And-Grana}.\\

\noindent Assume $(X, T, F)$ is a ternary $f$-quandle and $\alpha$  be a dynamical $f$-cocycle. For $x \in X$, define $T_x(a, b, c) := \alpha_{x,x,x}(a, b, c)$. Then it is easy to see that $(A, T_x, F)$ is a ternary $f$-quandle for all $x \in X$.

\begin{remark}
When $x = y = z$ in condition (\ref{Tcocycle-3})  above, we get 
\begin{align*}
& \alpha_{f(x), f(u), f(v)}(\alpha_{x, x, x}(a, b, c), g(d), g(e)) = \\ & \hspace{3cm} \alpha_{T(x, u, v), T(x, u, v), T(x, u, v)}(\alpha_{x, u, v}(a, d, e), \alpha_{x, u, v}(b, d, e), \alpha_{x, u, v}(c, d, e))\end{align*} for all  $ a, b, c, d, e \in A$. 
\end{remark}

\noindent Now, we discuss Extensions with constant cocycles.
Let $(X, T, F)$ be a ternary $f$-rack and $\lambda: X \times X \times X \to S_A$ where $S_A$ is group of permutations of $X$.\\
If $\lambda_{T(x, y, z), F(u), F(v)} \lambda_{x, y, z} = \lambda_{T(x, u, v), T(y, u, v), T(z, u, v)}\lambda_{x, u, v}$ we say $\lambda$ is a \textit{constant ternary $f$-rack cocycle}.\\
If $(X, T, F)$ is a ternary $f$-quandle and further satisfies $\lambda_{x,x,x} = id$ for all $x \in X$ , then we say $\lambda$ is a \textit{constant ternary $f$-quandle cocycle}.

\subsection{Modules over Ternary $f$-rack}
\begin{definition}
Let $(X, T, F)$ be a ternary $f$-rack, $A$ be an abelian group and $f, g : X \to X$ be  homomorphisms. A structure of $X$-module on $A$ consists of a family of automorphisms  $(\eta_{ijk})_{i, j, k \in X}$ and a  family of  endomorphisms $(\tau_{ijk})_{i, j, k \in X}$ of  $A$ satisfying the following conditions:
\begin{eqnarray}
 &&\eta_{T(x, y, z), f(u), f(v)} \eta_{x, y, z} = \eta_{T(x, u, v), T(y, u, v), T(z, u, v)} \eta_{x, u, v} \label{Tmod-con-I} \label{E1}\\
 &&\eta_{T(x, y, z), f(u), f(v)} \tau_{x, y, z} = \tau_{T(x, u, v), T(y, u, v), T(z, u, v) }\eta_{y, u, v} \label{Tmod-con-II}\label{E2}\\
 &&\eta_{T(x, y, z), f(u), f(v)} \mu_{x, y, z} = \mu_{T(x, u, v), T(y, u, v), T(z, u, v)} \eta_{z, u, v} \label{Tmod-con-III} \label{E3}\\
 \label{Tmod-con-IV} &&\tau_{T(x, y, z), f(u), f(v)}g   = \eta_{T(x, u, v), T(y, u, v), T(z, u, v)} \tau_{x, u, v} + \tau_{T(x, u, v), T(y, u, v), T(z, u, v)}\tau_{y, u, v} \nonumber \\  && + \mu_{T(x, u, v), T(y, u, v), T(z, u, v)}\tau_{z, u, v}  \\
\label{Tmod-con-V} &&\mu_{T(x, y, z), f(u), f(v)}g = \eta_{T(x, u, v), T(y, u, v), T(z, u, v)}\mu_{x, u, v} +  \tau_{T(x, u, v), T(y, u, v), T(z, u, v)} \mu_{y, u, v} \nonumber \\ && +  \mu_{T(x, u, v), T(y, u, v), T(z, u, v)} \mu_{z, u, v} 
\end{eqnarray}
\end{definition}

\noindent In the $n$-ary case, we generalized the above definition as follows.\\
\begin{definition}
Let $(X, T, F)$ be an $n$-ary $f$-rack, $A$ be an abelian group and $f, g : X \to X$ be  homomorphisms. A structure of $X$-module on $A$ consists of a family of automorphisms  $(\eta_{ijk})_{i, j, k \in X}$ and a  family of  endomorphisms $(\tau^i_{ijk})_{i, j, k \in X}$ of  $A$ satisfying the following conditions:
\begin{align}
&\eta_{T(x_1,x_2,...,x_n),f(y_2),f(y_3),...,f(y_n)}\eta_{x_1,x_2,...,x_n} = 
\eta_{T(x_1,y2,...,y_n),T(x_2,y_2,...,y_n),...,T(x_n,y_2,...,y_n)}\eta_{x_1,y_2,...,y_n}\\
&\eta_{T(x_1,x_2,...,x_n),f(y_2),f(y_3),...,f(y_n)}\tau^i_{x_1,x_2,...,x_n} = 
\tau^i_{T(x_1,y2,...,y_n),T(x_2,y_2,...,y_n),...,T(x_n,y_2,...,y_n)}\eta_{x_i,y_2,...,y_n}\\
&\tau^i_{T(x_1,x_2,...,x_n),f(y_2),f(y_3),...,f(y_n)} g = \eta_{T(x_1,y2,...,y_n),T(x_2,y_2,...,y_n),...,T(x_n,y_2,...,y_n)}\tau^i_{x_1,y_2,...,y_n} \nonumber \\
&\hspace{4cm}+\sum_{j=1}^{n-1} \tau^j_{T(x_1,y2,...,y_n),T(x_2,y_2,...,y_n),...,T(x_n,y_2,...,y_n)}\tau^i_{x_j,y_2,...,y_n}.
\end{align}
\end{definition}

\begin{remark}\label{E5}
If $X$ is a ternary $f$-quandle, a ternary $f$-quandle structure of $X$-module on $A$ is a structure of an $X$-module further satisfies
\begin{eqnarray} \tau_{f(x), f(u), f(v)}g   &= (\eta_{T(x, u, v), T(x, u, v), T(x, u, v)} + \tau_{T(x, u, v), T(x, u, v), T(x, u, v)} + \nonumber \\ & \quad  \mu_{T(x, u, v), T(x, u, v), T(x, u, v)})\tau_{x, u, v} \end{eqnarray} and  \begin{eqnarray}  \mu_{f(x), f(u), f(v)}g & = (\eta_{T(x, u, v), T(x, u, v), T(x, u, v)} +  \tau_{T(x, u, v), T(x, u, v), T(x, u, v)}  \nonumber \\ & \quad +  \mu_{T(x, u, v), T(x, u, v), T(x, u, v)}) \mu_{x, u, v}. \end{eqnarray}

\noindent Furthermore, if $f, g =id$ maps, then it satisfies 
$$\eta_{T(x, u, v), T(x, u, v), T(x, u, v)} +  \tau_{T(x, u, v), T(x, u, v), T(x, u, v)}  +  \mu_{T(x, u, v), T(x, u, v), T(x, u, v)} = id.$$
\end{remark}

\begin{remark}
When $ x=y=z$ in \eqref{Tmod-con-I}, we get \begin{eqnarray*} \eta_{f(x), f(u), f(v)} \eta_{x, x, x}  = \eta_{T(x, u, v), T(x, u, v), T(x, u, v)} \eta_{x, u, v} .\end{eqnarray*}
\end{remark}


\begin{example} Let $A$ be a non-empty set and
$(X, T, F)$ be a ternary $f$-quandle, and $\kappa$ be a generalized $2$-cocycle. For $a, b, c \in A$, let\\
\begin{center}
$ \alpha_{x, y, z}(a, b, c) = \eta_{x, y,z}(a) + \tau_{x, y,z}(b) + \mu_{x, y, z}(c)+ \kappa_{x, y, z}$.
\end{center}
Then, it can be verified directly that $\alpha$ is a dynamical cocycle and the following relations hold:
\begin{eqnarray}
 &&\eta_{T(x, y, z), f(u), f(v)} \eta_{x, y, z} = \eta_{T(x, u, v), T(y, u, v), T(z, u, v)} \eta_{x, u, v} \\
 &&\eta_{T(x, y, z), f(u), f(v)} \tau_{x, y, z} = \tau_{T(x, u, v), T(y, u, v), T(z, u, v) }\eta_{y, u, v} \\
 &&\eta_{T(x, y, z), f(u), f(v)} \mu_{x, y, z} = \mu_{T(x, u, v), T(y, u, v), T(z, u, v)} \eta_{z, u, v} \\
&&\tau_{T(x, y, z), f(u), f(v)}g   = \eta_{T(x, u, v), T(y, u, v), T(z, u, v)} \tau_{x, u, v} + \tau_{T(x, u, v), T(y, u, v), T(z, u, v)}\tau_{y, u, v} \nonumber \\  && \hspace{3cm}+ \mu_{T(x, u, v), T(y, u, v), T(z, u, v)}\tau_{z, u, v}\\
&&\mu_{T(x, y, z), f(u), f(v)}g = \eta_{T(x, u, v), T(y, u, v), T(z, u, v)}\mu_{x, u, v} +  \tau_{T(x, u, v), T(y, u, v), T(z, u, v)} \mu_{y, u, v} \nonumber \\ && \hspace{3cm}+  \mu_{T(x, u, v), T(y, u, v), T(z, u, v)} \mu_{z, u, v} \\
&& \eta_{T(x, y, z), f(u), f(v)} \kappa_{x, y, z} + \kappa_{T(x, y, z), f(u), f(v)} = \eta_{T(x, u, v), T(y, u, v), T(z, u, v)} \kappa_{x, u, v}  \nonumber \\ && \hspace{3cm}+ \tau_{T(x, u, v), T(y, u, v), T(z, u, v)} \kappa_{y, u, v} + \mu_{T(x, u, v), T(y, u, v), T(z, u, v)} \kappa_{z, u, v}  \nonumber \\&& 
\hspace{3cm} +\kappa_{T(x, u, v), T(y, u, v), T(z, u, v)}\label{2-Tcocycle}.
 \end{eqnarray}
\end{example}
\quad 
\begin{definition}When $\kappa$ further satisfies $\kappa_{x,x,x} =0 $ in (\ref{2-Tcocycle}) for any $x \in X$, we call it a \textit{generalized ternary $f$-quandle $2$-cocycle}.\end{definition}
\noindent Recall that the \textit{quandle algebra} of an $f$-quandle $(X, \rt, f)$ is a $\mathbb{Z}$-algebra $\mathbb{Z}(X)$ presented by generators as in \cite{And-Grana} with relations  (\ref{E1}),(\ref{E2}), (\ref{E3}),( \ref{Tmod-con-IV}),(\ref{Tmod-con-V}),(13),(14).

\begin{example} \label{Tetaidtauzero}
Let $(X,T, F)$ be a ternary $f$-quandle and $A$ be an abelian group. \\ Set $\eta_{x, y, z} = \tau_{x, y, z} , \mu_{x, y, z} =0, \kappa_{x, y, z} = \phi(x, y, z) $.\\
Then $\phi$ is a $2$-cocycle. That is, \begin{eqnarray*}  \phi(x, y, z) + \phi(T(x, y, z), f(u), f(v))  =  \phi(x, u, v) + \phi(y, u, v)\\ \hspace{3cm} +  \phi(T(x, u, v), T(y, u, v), T(z, u, v)).\end{eqnarray*}
\end{example}

\begin{example} Let $\Gamma = \mathbb{Z}[P,Q,R]$ denote the ring of Laurent polynomials. Then any $\Gamma$-module $M$ is a $\mathbb{Z}(X)$-module for any ternary $f$-quandle $(X, T, F)$ by $\eta_{x, y, z}(a) = Pa$ ,$\tau_{x, y,z}(b) = Qb$  and $ \mu_{x, y, z}(c) = Rc$ for any $x, y, z \in X$.
\end{example}

\begin{definition}
A set $G$ equipped with a ternary operator $T : G \times G \times G \to G$ is said to be a ternary group $(G, T)$ if it satisfies the following condition:\\
\begin{enumerate}[(i)]
\item $T(T(x, y, z), u, v) = T(x, T(y, z, u), v) = T(x, y, T(z, u, v))$ (associativity),
\item $T(e, x, e) = T(x, e, e) = T(e, e, x) = x $ (existence of identity element),
\item $ T(x, y, y) = T(y, x, y) = T(y, y, x) = e $ ( existence of inverse element).
\end{enumerate}
\end{definition}

\begin{example}
Here we provide an example of a ternary $f$-quandle module and explicit formula of the ternary $f$-quandle $2$-cocycle obtained from a group $2$-cocycle.
Let $G$ be a group and let $ 0 \to A \to E \to G \to 1$ be a short exact sequence of groups where $E = A \rtimes_{\theta}   G$ by a group $2$-cocycle $\theta$ and $A$ is an Abelian group.\\
The multiplication rule in $E$ given by $(a, x) \cdot (b, y) \cdot (c, z)= ( a + x \cdot b + y \cdot c + \theta(x, y, z), T(x, y, z))$, where $x \cdot b$ means the action of $A$ on $G$. Recall that the group 3-cocycle condition is $$\theta(x, y, z) + x \theta(y, z, u) + \theta(x, yz, u) = \theta(xy, z, u) + \theta(x, y, zu).$$  Now, let $X=G$ be a ternary $f$-quandle with the operation $ T(x, y, z) = f(xy^{-1}z) $ and let $g: A \rightarrow A$ be a map on $A$ so that we have a map $F:E \rightarrow E$  given by $F(a,x)=(g(a), f(x))$.  Therefore the group $E$ becomes a ternary $f$-quandle with the operation $$ T((a,x),(b,y),(c, z)) = F( (a,x) \cdot (b, y)^{-1} \cdot (c,z)).$$  Explicit computations give that
 $\eta_{x, y, z}(a) = g(a)$, $\tau_{x, y, z}(b) = -2xy^{-1}g(b) $, $ \mu_{x, y, z} = y^{-1}g(c)$ and $\kappa_{x, y, z} = g[\theta(xy, y, y^{-1}) - \theta(xy^{-1}, y, y) + \theta(xy, y^{-1}, y) + 2\theta(x, y, e) - \theta(x, y^2, y^{-1}) - \theta(x, y^{-1}, y^2) + \theta(x, y^{-1}, y) - \theta(x, y,y) - \theta( x, y, y^{-1}) + \theta(x, y^{-1}, z)]$.

\end{example}

\section{Cohomology Theory of $n$-ary $f$-quandles}

In this section we present a general cohomology for $n$-ary $f$-quandles, and include specific examples, including the generalized ternary
case, and specific examples in both the ternary and binary case.\\

\noindent Let $(X, T, f)$ be a ternary $f$-rack where $f: X \to X$ is a ternary $f$-rack morphism. We will define the  generalized cohomology theory of $f$-racks as follows:\\
For a sequence of elements $(x_1, x_2, x_3, x_4,\dots, x_{2p+1}) \in X^{2p+1}$  define\\

\small
\begin{eqnarray*}
	&&[x_1, x_2, x_3, x_4,\dots, x_{2p+1}] =\\
	&&T(\dots T ( T ( T (x_1, x_2, x_3), f(x_4), f(x_5)), f^2(x_6),  f^2(x_7))) \dots ) f^{p-1}(x_{2p}) , f^{p-1}(x_{2p+1})).
\end{eqnarray*}
\normalsize

More generally, if we are considering an  $n$-ary $f$-rack $(X,T,f)$, using the same notation $T$ for the $n$-ary operation, we define the bracket as follows:

\small
\begin{eqnarray*}
	&&[x_1, x_2, x_3, x_4,\dots, x_{(n-1)p+1}] = T(\dots T (T(x_1, \dots, x_n), f(x_{n+1}), \dots, f(x_{2n-1})) \dots\\
	&&\dots, f^{p-1}(x_{p(n-2)+1}),\dots,f^{p-1}(x_{(n-1)p+1}))
\end{eqnarray*}
\normalsize

\noindent Notice that for $ i=(p-1)j+1 <n$, we have 

\begin{eqnarray*}
	&&[x_1, x_2, x_3, x_4,\dots, x_n] =\\
	&&T([x_1,\dots, x_{i-1},x_{i+p},\dots, x_n], f^{i-2}[x_i,x_{i+p},\dots, x_n],f^{i-2}[x_{i+1},x_{i+p},\dots, x_n],\dots \\
	&&\dots,f^{i-2}[x_{i+p-1},x_{i+p},\dots, x_n] \\
\end{eqnarray*}
\normalsize

\noindent This relation is obtained by applying the first axiom of $f$-quandles $p-i$ times, first grouping the first $i-1$ terms together, then iterating this process, again grouping and iterating each.

\noindent We provide cohomology theory for the $f$-rack

\begin{theorem} \label{Cohomology}
	Consider the free left $\mathbb Z(X)$-module $C_p(X) = \mathbb Z(X)X^p $ with basis $X^p$.  For an abelian group $A$, denote $C^p(X,A):= Hom_{\mathbb Z(X)}(C_p(X) ,A)$, then the coboundary operators are defined as  $\partial = \partial^p : C^{p+1}(X) \to C^p(X)$ such that  	
	\begin{eqnarray*}
		\lefteqn{
			\partial^p \phi (x_1, \dots, x_{(n-1)p+1}) } \nonumber \\ && =
		(-1)^{p+1}\sum_{i=2}^{p+1} (-1)^{i} \{ \eta_{[A(i)],F^{i-2}([B_2(i)]),F^{i-2}([B_3(i)]),...,F^{i-2}([B_{n}(i)])} \phi (A(i))
		\nonumber \\
		&&
		- \phi  (T(C_1(i)),T(C_2(i)),...,T(C_{(n-1)i-1}(i)), F(x_{(n-1)i}),F(x_{(n-1)i+1}),...,F(x_{(n-1)p+1}))
		\nonumber\\
		&&
		+ (-1)^{p+1} \sum_{j=1}^{n-1} \tau^{i}_{[B_1(0)],[B_2(0)],...,[B_{n-1}(0)]} \phi (B_j(0)),
	\end{eqnarray*}
	
	where
	$A(i)=x_1,x_2,...,\hat{x}_{(n-1)i},\hat{x}_{(n-1)i+1},...,\hat{x}_{(n-1)(i+1)},x_{n(i+1)-i},...,x_{(n-1)p+1},$\\
	$B_k(i)=x_{(n-1)i+k},x_{(n-1)i+n+1},x_{(n-1)i+2},...,x_{(n-1)p+1},$\\
	$C_k(i)=x_k,x_{(n-1)i},x_{(n-1)i+1},...,x_{(n-1)i+n-2}.$
	
	Then the pair $(C(X), \partial)$ defines a cohomology complex.
\end{theorem}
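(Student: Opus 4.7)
The plan is to verify the cochain condition $\partial^{p+1} \circ \partial^p = 0$ by direct expansion of the double composition, exploiting the module axioms and the $f$-distributivity of the $n$-ary operation $T$.

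First, I would decompose $\partial^p \phi$ into three families of summands indexed by $i$: an $\eta$-family, in which $\phi$ is evaluated at $A(i)$ with a left coefficient of the form $\eta_{[A(i)],\dots}$; a distributivity family, in which $\phi$ is evaluated at the tuple obtained by applying $T$ to the first $n-1$ positions and $F$ to the remaining ones (with scalar coefficient $1$); and a $\tau^j$-family, in which $\phi$ is evaluated at $B_j(0)$ with a $\tau^j$-coefficient. This structural decomposition mirrors the classical rack/quandle cohomology setup and makes the cancellation pattern transparent.

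Second, I would apply $\partial^{p+1}$ to each of these three families and expand. Each family produces three subfamilies after the second application of $\partial$, yielding nine types of double-summation terms, which I expect to cancel as follows. The $\eta$-$\eta$ terms cancel via the module axiom~\eqref{E1}, after an index swap $i \leftrightarrow i'$ exchanging the two collapsing positions. The $\tau$-$\tau$ terms cancel by the higher module relation~\eqref{Tmod-con-IV} together with its $n$-ary analogue. The crossed $\eta$-$\tau^j$ and $\tau^j$-$\eta$ terms cancel by axioms~\eqref{E2} and~\eqref{E3}. The distributivity-distributivity terms cancel thanks to condition (I) of Definition~\ref{n-Tternarydef}, which is precisely the identity allowing one to reorder two successive groupings under $T$ (equivalently, the commutativity encoded by the map $\rho$ of \eqref{RHO}). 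The remaining mixed terms, involving a distributivity step paired with an $\eta$- or $\tau^j$-step, cancel because $\eta$ and $\tau^j$ are equivariant with respect to the action of $f$ and $T$ — precisely the content of the module axioms. The prefactor $(-1)^{p+1}$ combined with the inner $(-1)^i$ produces, after the index swap, opposite signs on the two members of each cancelling pair, giving a net zero.

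The main obstacle will be the combinatorial bookkeeping: controlling how the substitutions $A(i)$, $B_k(i)$ and $C_k(i)$ compose under the second application of $\partial$, and verifying that in each cancelling pair the arguments of $\phi$ literally agree. A useful reduction is to handle the ternary case $n=3$ in low degree ($p=1,2$) first in order to isolate the cancellation pattern, then extend to general $n$ and $p$ by induction on the arity, using that $f$-distributivity takes the same form at every arity. The most delicate step will be the $\tau$-$\tau$ cancellation, since it requires the full $n$-ary module relation for compositions $\tau^i \circ \tau^j$ and their intertwining with $g$, rather than merely the ternary relations written out explicitly in the text.
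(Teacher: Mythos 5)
Your proposal is correct and follows essentially the same route as the paper: a direct expansion of $\partial^{p+1}\circ\partial^{p}$ into the $\eta$-, distributivity-, and $\tau^{j}$-families, followed by pairwise cancellation via the module axioms (with an index swap producing the opposite signs), the $f$-distributivity identity for the distributivity-distributivity pairs, and the remaining $\tau$-$\tau$ terms absorbed by the third module relation. The only cosmetic difference is your suggested warm-up via the ternary low-degree case and induction on arity, which the paper bypasses by writing the general cancellation table directly.
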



\begin{proof}
	
	\noindent To prove that $\partial^{p+1}\partial^{p}=0$, and thus $\partial$ is a coboundary map we will break the composition into pieces, using the linearity of $\eta$ and $\tau^i$.
	
	\noindent First we will show that the composition of the $i^{th}$ term of the first summand of $\partial^p$ with the $j^{th}$ term of the first summand of $\partial^{p+1}$ cancels with the $(j+1)^{th}$ term of the first summand of $\partial^p$ with the $i^{th}$ term of the first summand of $\partial^{p+1}$ for $i\leq j$. As the sign of these terms are opposite, we need only show that the compositions are equal up to their sign. For the sake of readability we will introduce the following, based on $A$ and $B$ above:
	\begin{eqnarray*}A(i,j) && =x_1,...,\hat{x}_{(n-1)i},\hat{x}_{(n-1)i+1},...,\hat{x}_{ni},x_{ni+1},...,x_{(n-1)j-1}, \\&& \quad \hat{x}_{(n-1)j}, \hat{x}_{(n-1)j+1},...,\hat{x}_{nj},...,x_{(n-1)p+1},\end{eqnarray*}
	\begin{eqnarray*} B(i,j)=x_{(n-1)i+k},x_{ni+1},x_{ni+2},...,\hat{x}_{(n-1)j},\hat{x}_{(n-1)j+1},...,\hat{x}_{nj},...,x_{(n-1)p+1}.\end{eqnarray*}

	Now,  we can see that the composition of the $i^{th}$ term of the first summand of $\partial^p$ with the $j^{th}$ term of the first summand of $\partial^{p+1}$ can be rewritten as follows:
	
	\begin{multline*}
	\eta_{[A(i)],F^{i-2}[B_0(i)],F^{i-2}[B_1(i)],...,F^{i-2}[B_{n-2}(i)]}
	\eta_{[A(i,j+1)],F^{j-1}[B_0(j+1)],F^{j-1}[B_1(j+1)],...,F^{j-1}[B_{n-2}(j+1)]}
	\\
	=\eta_{T([A(i,j)],F^{j-1}[B_0(j+1)],...,F^{j-1}[B_{n-2}(j+1)]),T(F^{i-2}[B_0(i,j)],F^{j-1}[B_0(j+1)],...,F^{j-1}[B_{n-2}(j+1)])}
	\\
	_{T(F^{i-2}[B_1(i,j)],F^{j-1}[B_0(j+1)],...,F^{j-1}[B_{n-2}(j+1)]),...,T(F^{i-2}[B_{n-2}(i,j)],F^{j-1}[B_0(j+1)],...,F^{j-1}[B_{n-2}(j+1)])}
	\\
	\eta_{[A(i,j+1)],F^{j-1}[B_0(j+1)],F^{j-1}[B_1(j+1)],...,F^{j-1}[B_{n-2}(j+1)]}
	\\
	=\eta_{T([A(i,j)],F^{i-2}[B_0(i,j)],...,F^{i-2}[B_{n-2}(i,j)]),F^{j-1}[B_0(j+1)],...,F^{j-1}[B_{n-2}(j+1)]}
	\eta_{[A(i,j)],F^{i-2}[B_0(i,j)],...,F^{i-2}[B_{n-2}(i,j)]}
	\\
	=\eta_{[A(j+1)],F^{j-1}[B_0(j+1)],F^{j-1}[B_1(j+1)],...,F^{j-1}[B_{n-2}(j+1)]}\eta_{[A(i,j)],F^{i-2}[B_0(i,j)],...,F^{i-2}[B_{n-2}(i,j)]}.
	\end{multline*}

	This is precisely the $(j+1)^{th}$ term of the first summand of $\partial^p$ with the $i^{th}$ term of the first summand of $\partial^{p+1}$.
	
	Similar manipulations show that the composition of $\tau^i$ from $\partial^p$ with the $i^{th}$ term of the first sum of $\partial^{p+1}$ cancels with the composition of the $(i+1)^{th}$ term of the first sum of $\partial^p$ with $\tau^i$ from $\partial^{p+1}$. For the sake of brevity we will omit showing these manipulations, but the table below presents all relations which are canceled by similar manipulations.\\
	
	In the table, $\eta_i$ represents the $i^{th}$ summand of the first sum, $\circ_i$ represents the $i^{th}$ summand of the second sum, with order of composition determining its origin in $\delta_p$ or $\delta_{p+1}$.
	\begin{center}
		\begin{tabular}{ c c c }
			$\eta_i \eta_j$ & $=$ & $\eta_{j+1}\eta_i$ \\
			$\eta_i \circ_j$ & $=$ & $\circ_{j+1} \eta_i$ \\
			$\eta_i \tau^i$ & $=$ & $\tau^i \eta_{i+1}$ \\
			$\tau^i \circ_i $ & $=$ &  $ \circ_{i+1} \tau^i $\\
			$\circ_i \circ_j$ & $=$ & $\circ_{j+1} \circ_i$ .\\
		\end{tabular}
	\end{center}
	
	\noindent All these relations leave $n+1$ remaining terms, which cancel via the third axiom from the Definition.
\end{proof}

\noindent We present the ternary case below, using the convention from the previous section, so $\tau$ and $\mu$ representing $\tau^1$ and $\tau^2$ respectively.

\begin{example}\label{n=2}
By specializing $n=2$ in  Theorem \ref{Cohomology}, the coboundary operator simplifies to:
%
	\begin{eqnarray*}
		\lefteqn{
			\partial \phi (x_1, \dots, x_{2p+1}) } \nonumber \\ && =
		\sum_{i=1}^{p} (-1)^{i}    \eta_{\{A, B, C\}} \phi (x_1, \dots, \hat{x}_{2i}, \hat{x}_{2i+1}, \dots, x_{2p+1})
		\nonumber \\
		&&
		- \sum_{i=1}^{p} (-1)^{i}   \phi  (T(x_1,x_{2i}, x_{2i+1}), \dots, T(x_{2i-1},x_{2i},x_{2i+1}),F(x_{2i+2}), \dots, F(x_{2p+1}))
		\nonumber\\
		&&
		+ (-1)^{2p+1}   \tau_{[x_{1}, x_{4}, \dots, x_{2p+1}],[x_{2}, x_4, \dots, x_{2p+1}], [x_{3}, \dots, x_{2p+1}]} \phi (x_{2}, x_{4},\dots, x_{2p+1})\nonumber\\
		&&
		+ (-1)^{2p+1}   \mu_{[x_{1}, x_{4}, \dots, x_{2p+1}],[x_{2}, x_4, \dots, x_{2p+1}], [x_{3}, \dots, x_{2p+1}]} \phi (x_{3}, x_{4},\dots, x_{2p+1}),
	\end{eqnarray*}
	\noindent where $A = [x_1, \dots, \hat{x}_{2i},\hat{x}_{2i+1}, \dots, x_{2p+1}]$,
	$B= F^{\{i-1\}}[x_{2i},x_{2i+2}, x_{2i+3}, \dots, x_{2p+1}] $,\\
	$C=  F^{\{i-1\}}[x_{2i+1},x_{2i+2}, \dots, x_{2p+1}]$.\
\end{example}
Specializing further in  Example \ref{n=2}, we obtain the following result.

\begin{example} 
	In this example, we compute the first and second cohomology groups of the ternary Alexander $f$-quandle $X=\mathbb{Z}_3$ with coefficients in the abelian group $\mathbb{Z}_3$. For the ternary $f$-quandle under consideration we have $P=2$, $Q=R=1$, that is $T(x_1,x_2,x_3)=P x_1 + Q x_2 + R x_3$ and $f(x)=(P+Q+R)x$ as in Example \ref{TAlex}.  Now, setting $\eta$ to be the multiplication by $P$, $\tau$ to be the multiplication by $Q$, and $\mu$ to be the multiplication by $R$,  we have the $1$-cocycle condition for $\phi : X \to A$ given by 
	
	$$P\phi(x)+Q\phi(y)+R\phi(z)-\phi(T(x,y,z))=0$$
	
	and the 2-cocycle condition as
	
	\begin{eqnarray*}
		\lefteqn{ P\psi(x_1,x_2,x_3)+\psi(T(x_1,x_2,x_3),f(x_4),f(x_5))}\\
		& = & P\psi(x_1,x_4,x_5)+Q\psi(x_2,x_4,x_5)+R\psi(x_3,x_4,x_5)\\
		& + &\psi(T(x_1,x_4,x_5),T(x_2,x_4,x_5),T(x_3,x_4,x_5)).
	\end{eqnarray*}
	
	A direct computation gives $H^1(X=\mathbb{Z}_3,A=\mathbb{Z}_3)$ is $2$-dimensional with basis $\{2 \chi_0+\chi_1,2\chi_0+\chi_2\}$. As such the $dim(Im(\delta_1))=1$, and additional calculation gives $dim(ker(\delta_2))=3$, thus $H^2$ is also $2$-dimensional.
\end{example}

Lastly we consider a binary case, obtaining, as expected, a familiar result.

\begin{example} Let $\eta$ be the multiplication by $T$ and $\tau$ be the multiplication by $S$  in  Example \ref{TAlex}.	The $1$-cocycle condition is written for a function $\phi: X \rightarrow A$
	as
	$$ T \phi(x) + S \phi(y) - \phi(x* y)=0.$$
	Note that this means that $\phi: X \rightarrow A$ is a quandle homomorphism.

	For $\psi: X \times X \rightarrow A$, 	the $2$-cocycle condition can be written     as
	
	\begin{eqnarray*}
		\lefteqn{ T  \psi (x_1, x_2) +   \psi (x_1 * x_2, f(x_3)) } \\
		& = & T  \psi (x_1, x_3) + S \psi (x_2, x_3)
		+  \psi (x_1 * x_3, x_2 * x_3).
	\end{eqnarray*}
	
In \cite{CEGM}, the groups $H^1$ and $H^2$ with coefficients in the abelian group $\mathbb{Z}_3$ of the $f$-quandle  $X =\mathbb{Z}_3$,  $T=1,  S = 2$ and  $f(x) = 0$ were computed.   More precisely, $H^1(\mathbb{Z}_3,\mathbb{Z}_3) $ is $1$-dimensional with  a basis  $ \chi_1+2 \chi_2$ and $H^2$ is $1$-dimension with a basis  $\{\chi_{(1,2)} - \chi_{(2,1)}\}.$
	
\end{example}

\end{document}